\documentclass[11pt, letterpaper]{article}

\usepackage{amssymb,amsmath,amsfonts}
\usepackage{hyperref}
\usepackage{graphicx,color}
\usepackage{calrsfs}		
\usepackage{amsthm}	
\usepackage[round]{natbib}
\usepackage{bm}

\usepackage{comment}

\usepackage{geometry}

\flushbottom

\usepackage{enumitem}


\newcommand{\lc}{[\![}
\newcommand{\rc}{]\!]}
\newcommand{\1}[1]{{\boldsymbol 1_{\{#1\}}}}
\newcommand{\oo}{\boldsymbol 1}

\newcommand{\E}{\mathbb{E}}

\newcommand{\N}{\mathbb{N}}
\renewcommand{\P}{\mathbb{P}}

\newcommand{\R}{\mathbb{R}}

\newcommand{\Ecal}{{\mathcal E}}
\newcommand{\Fcal}{{\mathcal F}}

\newcommand{\Lcal}{{\mathcal L}}

\theoremstyle{plain}
\newtheorem{theorem}{Theorem}

\newtheorem{lemma}[theorem]{Lemma}
\newtheorem{proposition}[theorem]{Proposition}

\theoremstyle{definition}
\newtheorem{definition}[theorem]{Definition}

\theoremstyle{definition}
\newtheorem{remark}[theorem]{Remark}
\newtheorem{important remark}[theorem]{Important remark}
\newtheorem{example}[theorem]{Example}

\numberwithin{equation}{section}
\numberwithin{theorem}{section}

\allowdisplaybreaks

\begin{document}

\title{Stochastic Exponentials and Logarithms on Stochastic Intervals  --- A Survey\thanks{Parts of this note appeared in the unpublished manuscript \cite{Ruf_Larsson}. We thank two anonymous referees and Robert Stelzer for very helpful comments that led to an improvement of the paper. }}
\author{Martin Larsson\thanks{Department of Mathematics, ETH Zurich, R\"amistrasse 101, CH-8092, Zurich, Switzerland. E-mail: martin.larsson@math.ethz.ch} \and
Johannes Ruf\thanks{Department of Mathematics, London School of Economics and Political Science, Columbia House, Houghton St, London WC2A 2AE, United Kingdom. E-mail:     j.ruf@lse.ac.uk}}


\maketitle

\begin{abstract}
Stochastic exponentials are defined for semimartingales on stochastic intervals, and stochastic logarithms are defined for semimartingales, up to the first time the semimartingale hits zero continuously.  In the case of (nonnegative) local supermartingales, these two stochastic transformations are inverse to each other. The reciprocal of a stochastic exponential on a stochastic interval is again a stochastic exponential on a stochastic interval.

{\bf Keywords:} Involution, stochastic exponential, stochastic interval, stochastic logarithm.

{\bf MSC2010 subject classification:} Primary 60G99; secondary: 60H10, 60H99.
\end{abstract}

\section{Introduction}
The exponential and logarithmic functions are essential building blocks of classical calculus. As is emphasized by It\^o's formula, in stochastic calculus, second-order terms appear; the appropriate modifications of exponentials and logarithms  lead to  stochastic exponentials and stochastic logarithms. 

This note collects results for the calculus of stochastic exponentials and logarithms of semimartingales, possibly defined on stochastic intervals only. While the results of this note are no doubt well known, we were not able to find a suitable reference. We have found these results rather useful in a number of situations, mostly in the context of measure changes, where it is often convenient to switch between stochastic logarithms and stochastic exponentials. For example, a change of probability measure is given by a nonnegative random variable with expectation equal to one. By taking conditional expectations, this random variable yields a nonnegative martingale $Z$.   On the other hand, Girsanov's theorem, which describes the semimartingale characteristics of some semimartingale under the new measure, is more conveniently stated in terms of the stochastic logarithm of $Z$.

As elaborated, within stochastic calculus, stochastic exponentials and logarithms appear naturally in the context of absolutely continuous changes of measures. If this change of measure is not equivalent, but only absolutely continuous, the corresponding Radon-Nikodym derivative hits zero. Depending on whether it hits zero by a jump or continuously, the corresponding stochastic logarithm may or may not be defined on $[0, \infty)$.  
 This complication motivated us to formulate precise statements concerning the interplay between nonnegative semimartingales and their stochastic logarithms.   
 The price to pay is that these stochastic logarithms may only  be defined on stochastic intervals and not on all of $[0, \infty)$.

The reciprocal of a Radon-Nikodym derivative also bears an important interpretation. Provided the original change of measure is equivalent, this reciprocal serves again, under the new measure,  as a Radon-Nikodym derivative; indeed it yields exactly  the original measure.  For this reason, it is convenient to have a description of the dynamics of the reciprocal at hand.  

In general semimartingale theory, which in particular allows for jumps, the notion of stochastic exponential dates back to at least \citet{Doleans_1976}.
 Nowadays, basically any textbook on stochastic calculus introduces this notion. We highlight the survey article \cite{Rheinl_2010}, which reviews well known properties of stochastic exponentials of semimartingales.  In particular, this survey also collects classical conditions for the martingale property of the stochastic exponential. The article of \cite{Kallsen_Shir} provides further interesting identities, especially relating to exponential and logarithmic transforms, a subject which we do not discuss in this note. In contrast to these articles, we especially discuss the definition of stochastic logarithms of general semimartingales, without the assumption of strict positivity. 

We proceed as follows.  In Section~\ref{S:2}, we establish notation and introduce the concept of processes on stochastic intervals. In Section~\ref{S:3}, we define stochastic exponentials and logarithms, discuss their basic properties, and prove that they are inverse to each other.  In Section~\ref{S:4}, we describe the stochastic logarithm of the reciprocal of a stochastic exponential. Finally, in Section~\ref{S:5}, we provide some examples. These examples illustrate that stochastic exponentials of semimartingales, defined on stochastic intervals only, arise naturally.

\section{Notation and processes on stochastic intervals}  \label{S:2}

The following definitions are consistent with those in~\citet{JacodS}, to which the reader is referred for further details. We work on a stochastic basis $(\Omega,\mathcal F, \mathbb F, \P)$, where the filtration $\mathbb F=(\Fcal_t)_{t\ge0}$ is right-continuous but not necessarily augmented with the $\P$-nullsets. Relations between random quantities are understood in the almost sure sense.

Given a process $X=(X_t)_{t\ge0}$, write $X_-$ for the left limit process (limit inferior if a limit does not exist) and $\Delta X=X-X_-$ for its jump process, using the convention $X_{0-}=X_0$.  The corresponding jump measure is denoted by $\mu^X$, and for any (random) function $F: \Omega \times \R_+ \times \R\to\R$, the stochastic integral of $F$ with respect to $\mu^X$ is the process $F*\mu^X$ given by
\[
F*\mu^X_t = \begin{cases} \sum_{s \leq t} F(s, \Delta X_s) \1{\Delta X_s \neq 0}, & \text{if } \sum_{s \leq t} |F(s, \Delta X_s)| \1{\Delta X_s \neq 0} < \infty, \\
+\infty, &\text{otherwise,}\end{cases}\qquad t\ge0.
\]

Semimartingales are required by definition to be right-continuous, almost surely admitting left limits. If $X$ is a semimartingale,  $H\cdot X$ is the stochastic integral of an $X$--integrable process $H$ with respect to $X$.
 
For a stopping time  $\tau$, we let $X^\tau$ denote the process $X$ stopped at~$\tau$, and we define the stochastic interval
\[
\lc0,\tau\lc = \{ (\omega,t)\in\Omega\times\R_+ : 0 \leq  t<\tau(\omega)\}.
\]
Note that stochastic intervals are disjoint from $\Omega\times\{\infty\}$ by definition.

A process $X$ on a stochastic interval $\lc0,\tau\lc$, where $\tau$ is a stopping time, is the restriction to $\lc0,\tau\lc$ of some process. 
In this paper, $\tau$ will be a foretellable time; that is, a $[0,\infty]$--valued stopping time that admits a nondecreasing sequence $(\tau_n)_{n \in \N}$ of stopping times, with $\tau_n<\tau$ almost surely for all $n \in \N$ on the event $\{\tau>0\}$, and $\lim_{n \uparrow \infty} \tau_n = \tau$ almost surely. Such a sequence is called an announcing sequence. Every predictable time is foretellable, and if the stochastic basis is complete the converse also holds; see \citet[Theorem~I.2.15 and I.2.16]{JacodS}.\footnote{In general, the converse implication does not hold. For example, consider the canonical space of c\`adl\`ag paths, equipped with the Skorohod topology and the Wiener measure. Then the first time that the coordinate process crosses a given level is foretellable but not predictable, given the canonical filtration not augmented by the Wiener nullsets.}

If $\tau$ is a foretellable time and $X$ is a process on $\lc0,\tau\lc$, we say that $X$ is a semimartingale (local martingale / local supermartingale) on $\lc0,\tau\lc$ if there exists an announcing sequence $(\tau_n)_{n\in \N}$ for $\tau$ such that $X^{\tau_n}$ is a semimartingale (martingale / supermartingale) for each $n \in \N$.
Basic notions for semimartingales carry over by localization to semimartingales on stochastic intervals. For instance, if $X$ is a semimartingale on $\lc0,\tau\lc$, its quadratic variation process $[X,X]$ and the continuous version $[X,X]^c$ are defined as the processes on $\lc0,\tau\lc$ that satisfy $[X,X]^{\tau_n} = [X^{\tau_n}, X^{\tau_n}]$ and $([X,X]^c)^{\tau_n} = [X^{\tau_n}, X^{\tau_n}]^c$, respectively,  for each $n \in \N$. The jump measure~$\mu^X$ of $X$ is defined analogously, as are stochastic integrals with respect to $X$ (or $\mu^X$). In particular, $H$ is called $X$--integrable if it is $X^{\tau_n}$--integrable for each $n\in\N$, and $H\cdot X$ is defined as the semimartingale on $\lc 0, \tau\lc$ that satisfies $(H \cdot X)^{\tau_n} = H \cdot X^{\tau_n}$ for each $n \in \N$.  We refer to \citet{Maisonneuve1977}, \citet{Jacod_book}, and Appendix~A in~\citet{CFR2011} for further details on local martingales on stochastic intervals.

The following version of the supermartingale convergence theorem is a useful technical tool for studying stochastic exponentials of local supermartingales.  It has been proven, for example, in \citet{CFR2011} via Doob's inequalities. For sake of completeness, we provide an alternative proof here.

\begin{proposition} \label{P:supermg conv}
Let $\tau$ be a foretellable time, and let $X$ be a local supermartingale on $\lc0,\tau\lc$ bounded from below. Then $\lim_{t\uparrow\tau}X_t$ exists in $\R$ and $[X,X]_\tau=\lim_{t\uparrow\tau}[X,X]_t$ is finite.
\end{proposition}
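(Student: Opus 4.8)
The plan is to reduce to a nonnegative process and then treat the two assertions in turn, the second by transforming $X$ into a bounded supermartingale on the same stochastic interval.

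Since shifting $X$ by a constant affects neither $[X,X]$ nor the existence of a limit, I may assume $X\ge 0$ after replacing $X$ by $X-c$ for a lower bound $c$. Fix an announcing sequence $(\tau_n)$ with each $X^{\tau_n}$ a nonnegative supermartingale; in particular $X_0=X^{\tau_1}_0$ is integrable. For the limit, I apply Doob's upcrossing inequality to $X^{\tau_n}$: for rationals $a<b$ the expected number of upcrossings of $[a,b]$ over $[0,\infty)$ is at most $E[X_0]/(b-a)$, uniformly in $n$. Because $X^{\tau_n}$ is constant after $\tau_n$, the number of upcrossings of $[a,b]$ by $X$ on $\lc0,\tau\lc$ is the increasing limit over $n$ of these, so by monotone convergence it has finite expectation and is thus a.s.\ finite; intersecting over rational $a<b$ shows $\lim_{t\uparrow\tau}X_t$ exists in $[0,\infty]$. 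To exclude $+\infty$, note $\lim_{t\uparrow\tau}X_t=\lim_n X_{\tau_n}$ a.s., and since $E[X_{\tau_n}]\le E[X_0]$ (Fatou plus the supermartingale property), Fatou again gives $E[\lim_n X_{\tau_n}]\le E[X_0]<\infty$. Hence the limit is finite, i.e.\ (for the original $X$) exists in $\R$.

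For the quadratic variation I first record that $X^\ast:=\sup_{t<\tau}X_t<\infty$ a.s., which is immediate since $t\mapsto X_t$ is c\`adl\`ag on $[0,\tau)$ and, by the above, has a finite left limit at $\tau$. The key device is the bounded, increasing, concave function $g(x)=x/(1+x)$, mapping $[0,\infty)$ into $[0,1)$. Then $g(X)$ is a process on $\lc0,\tau\lc$ with $g(X)^{\tau_n}=g(X^{\tau_n})$, and conditional Jensen shows each $g(X^{\tau_n})$ is a $[0,1]$-valued supermartingale. A routine second-moment estimate for a bounded nonnegative supermartingale---expand $g(X^{\tau_n})^2$ by It\^o's formula, use the Doob--Meyer decomposition $g(X^{\tau_n})=g(X_0)+M^n-A^n$ with $E[A^n_\infty]\le E[g(X_0)]\le 1$, and localize the local-martingale part---gives $E[[g(X^{\tau_n}),g(X^{\tau_n})]_\infty]\le 3$, a bound \emph{independent of $n$}. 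Since $[g(X^{\tau_n}),g(X^{\tau_n})]_\infty=[g(X),g(X)]_{\tau_n}$ increases to $[g(X),g(X)]_\tau$, monotone convergence yields $E[[g(X),g(X)]_\tau]\le 3$, so $[g(X),g(X)]_\tau<\infty$ a.s.

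It remains to transfer this to $X$. On $\{X^\ast<a\}$, $a\in\N$, the process $X$ and its left limits take values in $[0,a)$, where $g'(x)=(1+x)^{-2}\ge(1+a)^{-2}=:\varepsilon_a>0$. It\^o's formula gives $[g(X),g(X)]^c=\int(g'(X_-))^2\,d[X,X]^c\ge\varepsilon_a^2\,[X,X]^c$ on this event, and the mean value theorem gives $(\Delta g(X)_s)^2=(g'(\xi_s))^2(\Delta X_s)^2\ge\varepsilon_a^2(\Delta X_s)^2$ with $\xi_s\in[0,a)$; summing, $[g(X),g(X)]_\tau\ge\varepsilon_a^2\,[X,X]_\tau$ on $\{X^\ast<a\}$, hence $[X,X]_\tau\le\varepsilon_a^{-2}[g(X),g(X)]_\tau<\infty$ there. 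As $\{X^\ast<\infty\}=\bigcup_{a\in\N}\{X^\ast<a\}$ has full probability, $[X,X]_\tau<\infty$ a.s.

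The step I expect to be the main obstacle is this last one. The stopped processes $X^{\tau_n}$ are genuine nonnegative supermartingales but come with no uniform integrability or $L^2$ control, and $[X,X]_\tau$ equals the supremum over $n$ of $[X,X]_{\tau_n}$, which a priori could be infinite; passing through the bounded transform $g(X)$ is exactly what produces the required $n$-uniform bound (and avoids truncating $X$ at a level, which would create a troublesome terminal jump), while the pathwise bound $X^\ast<\infty$ is what allows one to convert that bound back into a bound on $[X,X]_\tau$.
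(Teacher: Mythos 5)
Your proof is correct, but it takes a genuinely different route from the paper. The paper's argument is a reduction to classical theory: it extends $X$ to the global process $X'=X\oo_{\lc0,\tau\lc}$, shows via Fatou's lemma that $X'$ is a (nonnegative) supermartingale on all of $[0,\infty)$, and then invokes the classical supermartingale convergence theorem, noting that closability of the supermartingale yields both the existence of the limit and the finiteness of $[X,X]_\tau$. You never construct this zero-extension; instead you work directly with the stopped processes $X^{\tau_n}$: the limit comes from an $n$-uniform upcrossing bound plus Fatou to rule out the value $+\infty$, and the quadratic variation comes from the bounded concave transform $g(x)=x/(1+x)$, for which conditional Jensen gives $[0,1]$-valued supermartingales and an It\^o/Doob--Meyer computation gives the $n$-uniform bound $\E\bigl[[g(X),g(X)]_\tau\bigr]\le 3$, transferred back to $[X,X]_\tau$ on the events $\{\sup_{t<\tau}X_t<a\}$ via the lower bound on $g'$. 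Both treatments are sound; the paper's buys brevity by citing the closed-supermartingale theory (including the fact, stated there without proof, that a closable supermartingale has convergent quadratic variation), whereas your argument is longer but self-contained, and in particular supplies an explicit proof of precisely that quadratic-variation step. Minor quibbles only: the constant in your upcrossing bound should more safely be quoted as $E[(X^{\tau_n}_\infty-a)^-]/(b-a)\le a/(b-a)$ (or $\E[X_0\wedge a]/(b-a)$), but any $n$-independent bound serves your purpose, and the mean-value-theorem step for the jumps should note that both $X_{s-}$ and $X_s$ lie in $[0,a)$ on the event considered, which is indeed the case since they are bounded by $\sup_{t<\tau}X_t$.
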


\begin{proof}
	Without loss of generality, we shall assume $X \geq 0$. We define $X' = X \oo_{\lc 0,  \tau\lc}$. We now argue that $X'$ is a supermartingale, which in particular implies that $X'$ allows for a modification with  left limits almost surely. This observation then implies the assertion since the classical supermartingale convergence theorem (see Problem~1.3.16 in \cite{KS1}) yields that $X$ can be closed. This directly implies the existence of a limit at infinity and the convergence of $[X,X]$; in particular then $[X,X]_\tau < \infty$.
	
To prove that $X'$ is a supermartingale, let $(\tau_n)_{n\in \N}$ be an announcing sequence for $\tau$ such that $X^{\tau_n}$ is a supermartingale.  Fix $s,t \geq 0$ with $s<t$. Then, on the event $\{s \geq \tau\}$, we have $\E[X'_t|\Fcal_s] = 0 = X'_s$. On the event $\{s < \tau\}$, Fatou's lemma implies
\begin{align*}
	\E[X_t' | \Fcal_s] = \E\left[\left.\lim_{n \rightarrow \infty} X_t^{\tau_n} \oo_{\{\tau > t\}} \right| \Fcal_s\right] \leq \liminf_{n \rightarrow \infty}  \E[X_t^{\tau_n} | \Fcal_s ] \leq \lim_{n \rightarrow \infty}  X_s^{\tau_n} = X_s = X_s',
\end{align*}
yielding the claim.
\end{proof}

The following corollary will be used below.
\begin{lemma} \label{L:181009}
Let $\tau$ be a foretellable time, and let  $X$ be a local supermartingale on $\lc 0, \tau\lc$  with $\Delta X \geq -1$. Then we have, almost surely, the set identity 
\begin{align} \label{eq:181008}
	\left\{\lim_{t\uparrow\tau} X_t  \text{ does not exist in $\R$} \right\} = \left\{\lim_{t\uparrow\tau}X_t  = -\infty\right\} \cup \left\{[X,X]_{\tau}  = \infty\right\}.
\end{align}
\end{lemma}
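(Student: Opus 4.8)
The plan is to prove the two inclusions in~\eqref{eq:181008} separately, with Proposition~\ref{P:supermg conv} and the canonical decomposition of a local supermartingale as the main tools.

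For ``$\supseteq$'' the inclusion of $\{\lim_{t\uparrow\tau}X_t=-\infty\}$ is immediate, so I would show that $[X,X]_\tau<\infty$ on the event $A:=\{\lim_{t\uparrow\tau}X_t\text{ exists in }\R\}$. For $k\in\N$ put $\sigma_k=\inf\{t\ge0:X_t<-k\}$ on $\lc 0,\tau\lc$ and consider $X^{\sigma_k}\1{X_0\ge-k}$ (the $\Fcal_0$--truncation is only needed to handle an unbounded $X_0$). Optional stopping along an announcing sequence shows this is a local supermartingale on $\lc 0,\tau\lc$, and it is bounded below by $-k-1$: it is $\ge-k$ before $\sigma_k$, while $X_{\sigma_k}\ge X_{\sigma_k-}-1\ge-k-1$ since $\Delta X\ge-1$. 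Proposition~\ref{P:supermg conv} then yields that its quadratic variation at $\tau$ is finite; on $\{\sigma_k=\tau\}\cap\{X_0\ge-k\}$ it coincides with $X$ on $\lc 0,\tau\lc$, so $[X,X]_\tau<\infty$ there. Since a càdlàg path on $\lc 0,\tau\lc$ with a finite limit is bounded, $A\subseteq\{\inf_{t<\tau}X_t>-\infty\}\subseteq\bigcup_k\big(\{\sigma_k=\tau\}\cap\{X_0\ge-k\}\big)$, and the inclusion follows.

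For ``$\subseteq$'' it suffices to show that, almost surely on $\{[X,X]_\tau<\infty\}$, the limit $\lim_{t\uparrow\tau}X_t$ exists in $[-\infty,+\infty)$; indeed such a limit, when not in $\R$, is necessarily $-\infty$, and on $\{[X,X]_\tau=\infty\}$ there is nothing to prove. So I would work on $\{[X,X]_\tau<\infty\}$ and write $X=X_0+M-A$ for the canonical decomposition on $\lc 0,\tau\lc$, with $M$ a local martingale and $A$ a predictable nondecreasing process, $A_0=0$; since $\Delta X\ge-1$, one has $0\le\Delta A_t=-\E[\Delta X_t\mid\Fcal_{t-}]\le1$. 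Because $X-M$ has finite variation, the continuous local martingale part $M^c$ of $M$ satisfies $[M^c,M^c]_\tau=[X,X]^c_\tau<\infty$, and stopping $M^c$ at the first time $[M^c,M^c]$ exceeds a given level produces an $L^2$--bounded martingale, so $M^c$ converges in $\R$ at $\tau$. Writing $X':=X-M^c=X_0+M^d-A$ with $M^d:=M-M^c$ purely discontinuous, it then remains to prove that $X'$ converges in $[-\infty,+\infty)$; note $\sum_{t<\tau}(\Delta X'_t)^2=\sum_{t<\tau}(\Delta X_t)^2\le[X,X]_\tau<\infty$.

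The decisive step, and the one I expect to be hardest, handles $X'$ as follows. Since $\sum_{t<\tau}(\Delta X_t)^2<\infty$, only finitely many $t<\tau$ satisfy $|\Delta X_t|>1$; splitting according to their number, let $S$ be the last of these stopping times (and $S=0$ if there are none). For $t>S$ one has $|\Delta X_t|\le1$, hence $|\Delta M^d_t|=|\Delta X_t+\Delta A_t|\le2$; moreover, at each jump time $T\in(S,\tau)$ of $A$ --- which is necessarily predictable --- Jensen's inequality gives $(\Delta A_T)^2\le\E[(\Delta X_T)^2\mid\Fcal_{T-}]$, so comparing the nondecreasing process $t\mapsto\sum_{S<s\le t}(\Delta X_s)^2\1{\Delta A_s\neq0}$ (which has jumps $\le1$ and finite mass at $\tau$) with its predictable compensator gives $\sum_{S<T<\tau}(\Delta A_T)^2<\infty$. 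Hence the local martingale $N:=M^d-(M^d)^S$ on $\lc 0,\tau\lc$ has $|\Delta N|\le2$ and $[N,N]_\tau=[M^d,M^d]_\tau-[M^d,M^d]_S=\sum_{S<t<\tau}(\Delta X_t+\Delta A_t)^2<\infty$; stopping it at the first time $[N,N]$ exceeds a given level makes it $L^2$--bounded, so $N$, and therefore $M^d$, converges in $\R$ at $\tau$. As $A$ is nondecreasing it converges in $[0,+\infty]$, so $X'=X_0+M^d-A$ converges in $[-\infty,+\infty)$, completing the argument. The crux is exactly that when $A_\tau=\infty$ the quadratic variation of $M^d$ is not controllable globally; the passage beyond the finitely many large jumps of $X$ --- where $\Delta A\le1$ (from $\Delta X\ge-1$) and the conditional Jensen estimate restore bounded jumps and finite quadratic variation --- is what brings the situation back to a routine $L^2$--localisation.
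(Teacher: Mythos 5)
Your ``$\supseteq$'' half is fine and is essentially the paper's own argument: stop at the first time $X$ drops below $-k$, note the stopped process is a local supermartingale on $\lc 0,\tau\lc$ bounded below by $-k-1$ (using $\Delta X\ge-1$), and invoke Proposition~\ref{P:supermg conv}. The problem is in the ``$\subseteq$'' half, at precisely the step you call decisive. The time $S$, the \emph{last} time with $|\Delta X_t|>1$, is not a stopping time, so $N:=M^d-(M^d)^S$ is not a local martingale and the process $\sum_{S<s\le\cdot}(\Delta X_s)^2\1{\Delta A_s\neq0}$ is not even adapted, so neither the $L^2$--localisation for $N$ nor the compensator comparison is legitimate. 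Splitting according to the number $k$ of big jumps does not repair this: on the event $E_k=\{\text{exactly $k$ big jumps}\}$ you may replace $S$ by the $k$-th big-jump time $T_k$, which is a stopping time, but the bounds $|\Delta N|\le2$ and ``jumps $\le1$'' then hold only on $E_k$, and restricting to $E_k$ means conditioning on future behaviour, which destroys the (local) martingale property that the $L^2$ and compensator arguments require. (The $A$-part alone admits an easy global fix: since $\Delta X\ge-1$ one has $\Delta A_T=\E[-\Delta X_T\,|\,\Fcal_{T-}]\le\E[1\wedge|\Delta X_T|\,|\,\Fcal_{T-}]$, so Jensen applied to the truncated process $\sum_{s\le\cdot}\bigl(1\wedge(\Delta X_s)^2\bigr)$, which has jumps $\le1$, gives $\sum(\Delta A_T)^2<\infty$ on $\{[X,X]_\tau<\infty\}$ without ever mentioning $S$. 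But this does not help with $M^d$.)

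Worse, the intermediate claim you are driving at --- that $M^d$ converges in $\R$ a.s.\ on $\{[X,X]_\tau<\infty\}$ --- is false, so no minor patch can rescue the step. Counterexample: let $X$ be the piecewise constant martingale with independent increments at integer times $n$, the $n$-th increment being $a_n-a_np_n$ with probability $p_n$ and $-a_np_n$ otherwise, with $a_n=4^n$ and $a_np_n=1/n$. Then $\Delta X\ge-1$, $X$ is a martingale with $X=M^d$ (so $A=0$, $M^c=0$), Borel--Cantelli gives finitely many big jumps a.s., hence $[X,X]_\infty=\sum_n(\Delta X_n)^2<\infty$ a.s., and yet $X_t\to-\infty$ a.s., because the compensation of the rare, huge, positive jumps accumulates \emph{inside} the martingale itself. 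On this example your $N=M^d-(M^d)^S$ has $|\Delta N|\le1$ and $[N,N]_\infty<\infty$ but tends to $-\infty$ --- exactly because it is not a local martingale. A correct proof must allow the purely discontinuous martingale part to diverge to $-\infty$ as well; this is why the ``$\subset$'' inclusion is the delicate one, and indeed the paper does not reprove it but cites Corollary~4.4 of \cite{Larsson:Ruf:convergence}, offering in Remark~\ref{R:181009} only an alternative argument under the stronger hypothesis $\Delta X>-1$ via the stochastic exponential and Propositions~\ref{P:supermg conv} and~\ref{P:1'''}.
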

\begin{proof}
This statement is proven in Corollary~4.4 of \cite{Larsson:Ruf:convergence}.  For sake of completeness, we provide a proof here of the inclusion ``$\supset$''. For an arbitrary $m\in\N$, define the stopping time $\rho=\inf\{t\ge0\colon X_t\le -m\}$. Then $X^\rho$ is a local supermartingale on $\lc0,\tau\lc$ bounded from below by $-m-1$, whence $[X^\rho, X^\rho]_{\tau} < \infty$ by Proposition~\ref{P:supermg conv}. Since $X$ coincides with $X^\rho$ on $\{X\ge -m\}$, we deduce that
\[
\left\{\lim_{t\uparrow\tau}X_t \text{ exists in $\R$}\right\} \subset \bigcup_{m\in\N} \{X \ge -m\} \subset \left\{[X,X]_{\tau}  < \infty\right\};
\]
hence the inclusion follows.  For the inclusion ``$\subset$'', see Remark~\ref{R:181009} below.
\end{proof}

\section{Stochastic exponentials and logarithms}   \label{S:3}

In this section, we define stochastic exponentials and logarithms, develop some of their properties, and show that they are inverse to each other.

\subsection{Stochastic exponentials}

\begin{definition}[Stochastic exponential]\label{D:stochExp}
Let $\tau$ be a foretellable time, and let $X$ be a semimartingale on $\lc 0,\tau\lc$. 
The \emph{stochastic exponential of $X$} is the process $\mathcal E(X)$ defined by
\begin{equation} \label{eq:D:stochExp}
\mathcal E(X )_t = \exp\left(X_t - \frac{1}{2}[X,X]^c_t \right) \prod_{0<s\le t} (1+\Delta X_s)e^{-\Delta X_s}
\end{equation}
for all $t< \tau$, and by $\mathcal E(X)_t=0$ for all $t\ge\tau$. \qed
\end{definition}

\begin{remark}
Note that for each $n \in \N$, on the interval $\lc 0, \tau_n\lc$ there are only finitely many times $t$ such that $\Delta X_t < -1$.  Moreover, whenever $\Delta X_t\ge-1$ for some $t \in [0, \tau)$, then the corresponding factor in the infinite product in \eqref{eq:D:stochExp} lies in $[0,1]$.  Hence, the  infinite product converges  on the interval $\lc 0, \tau_n\lc$  for each $n \in \N$.
 We also emphasize that the stochastic exponential  $\mathcal E(X )$ of a semimartingale $X$ on $\lc0,\tau\lc$ need not be a semimartingale on  $\lc0,\infty\lc$, but only on $\lc 0,\tau\lc$.
\qed
\end{remark}

\begin{remark}
Whenever we have $\Delta X_t<-1$ for some $t \in [0, \tau)$  the resulting stochastic exponential  changes sign at $t$. Such stochastic exponentials appear in the context of signed measures such as in the study of mean-variance hedging strategies; see, for example, \cite{Cerny:Kallsen}.
\qed
\end{remark}

The process $\mathcal E(X)$ is sometimes also called generalized stochastic exponential; see, for example, \cite{Mijatovic:Novak:Urusov}. If $(\tau_n)_{n\in\N}$ is an announcing sequence for $\tau$, then $\Ecal(X)$ of Definition~\ref{D:stochExp} coincides on $\lc0,\tau_n\rc$ with the usual (Dol\'eans-Dade) stochastic exponential of $X^{\tau_n}$. This shows that $\Ecal(X)$ coincides with the classical notion when $\tau=\infty$. Many properties of stochastic exponentials thus remain valid. For instance, if $\Delta X>-1$ then $\Ecal(X)$ is strictly positive on~$\lc0,\tau\lc$. If $\Delta X_t=-1$ for some $t\in[0,\tau)$ then $\Ecal(X)$ jumps to zero at time $t$ and stays there. Also, on $\lc0,\tau\lc$, $\mathcal E(X)$ is the unique solution to the equation
\begin{equation} \label{eq:170203.1}
Z = e^{X_0} + Z_- \cdot X \qquad \text{on $\lc 0,\tau\lc$};
\end{equation}
 see  \citet{Doleans_1976}. We also record the alternative expression
\begin{equation} \label{eq:170206.1}
\mathcal E( X ) = \oo_{\lc0,\tau\lc} \exp\left(X - \frac{1}{2}[X,X]^c - (x - \log|1+x|) * \mu^X\right) (-1)^{\sum_{t \leq \cdot} \1{ \Delta X_t < -1} },
\end{equation}
where we use the convention $-\log(0)=\infty$ and $e^{-\infty}=0$.

The following results relate the convergence of $\Ecal(X)$ to zero  to the behavior of $X$.

\begin{proposition} \label{P:1'''}
	 The following set inclusion holds almost surely:
	 \begin{align*}
	 	\left\{\lim_{t\uparrow\tau}\Ecal(X)_t=0\right\} \subset \left\{\lim_{t\uparrow\tau}X_t  = -\infty\right\} \cup \left\{[X,X]_{\tau}  = \infty\right\} \cup \left\{\Delta X_t = -1\text{ for some $t \in [0, \tau)$} \right\}.
	 \end{align*}
	 Moreover, if we additionally have $\Delta X \geq -1$ and $\limsup_{t \uparrow \tau} X_t < \infty$, then the reverse set inclusion also holds.
\end{proposition}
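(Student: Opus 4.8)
The strategy is to work with the explicit formula \eqref{eq:170206.1} for $\Ecal(X)$, taking logarithms on the event where things stay finite, and to reduce the two inclusions to the convergence result in Lemma~\ref{L:181009}.

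\emph{First inclusion.} Working on $\lc0,\tau\lc$, define $Y = X - \frac12[X,X]^c - (x-\log|1+x|)*\mu^X$, so that $|\Ecal(X)| = \oo_{\lc0,\tau\lc}e^{Y}$ on the event where no jump equals $-1$. On the complement of $\{\Delta X_t = -1 \text{ for some } t\in[0,\tau)\}$ we then have $\{\lim_{t\uparrow\tau}\Ecal(X)_t = 0\} = \{\lim_{t\uparrow\tau}Y_t = -\infty\}$. Now observe that $x - \log|1+x| \ge 0$ for all $x > -1$ and, on each $\lc0,\tau_n\lc$, there are only finitely many jumps with $\Delta X_t \le -1$ (where the integrand may be negative, taking the value $-\log 0 = +\infty$ when $\Delta X_t = -1$, which is excluded here); hence off the excluded event, $(x-\log|1+x|)*\mu^X$ differs from a nondecreasing process by a process that is constant on each $\lc0,\tau_n\lc$ for $n$ large, i.e. by a finite-variation process that is eventually nondecreasing up to $\tau$. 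Therefore $\lim_{t\uparrow\tau}Y_t = -\infty$ forces either $\lim_{t\uparrow\tau}(X_t - \frac12[X,X]^c_t) = -\infty$, which gives $\lim_{t\uparrow\tau}X_t = -\infty$ or $[X,X]^c_\tau = [X,X]_\tau = \infty$ (splitting according to whether $[X,X]^c_\tau$ is finite or not). Actually the cleanest route: on the excluded event's complement, if both $\lim_{t\uparrow\tau}X_t$ exists in $\R$ and $[X,X]_\tau < \infty$, then $[X,X]^c_\tau < \infty$, the jump sum $\sum_{s<\tau}|\Delta X_s|^2 < \infty$ so $(x-\log|1+x|)*\mu^X$ converges (using $x-\log|1+x| = O(x^2)$ as $x\to 0$ and finitely many large jumps), and hence $Y_\tau$ exists in $\R$, so $\Ecal(X)_\tau = e^{Y_\tau} > 0$. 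Contrapositive gives the first inclusion.

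\emph{Reverse inclusion under $\Delta X \ge -1$ and $\limsup_{t\uparrow\tau}X_t < \infty$.} Now the jump-to-zero event is empty unless some $\Delta X_t = -1$, in which case $\Ecal(X)$ is zero from that time on and the inclusion is immediate; so assume $\Delta X > -1$ on $[0,\tau)$ and set again $|\Ecal(X)| = \Ecal(X) = e^Y$. We must show that on $\{\lim_{t\uparrow\tau}X_t = -\infty\} \cup \{[X,X]_\tau = \infty\}$ we have $\lim_{t\uparrow\tau}Y_t = -\infty$. Apply Lemma~\ref{L:181009} to $X$ itself (which has $\Delta X \ge -1$, but is only a semimartingale — so instead apply it after noting the relevant structure, or argue directly): on $\{[X,X]_\tau = \infty\}$, either $\lim_{t\uparrow\tau}X_t$ fails to exist in $\R$, or $X$ converges and then $[X,X]^c_\tau = \infty$ (the jump part $\sum|\Delta X|^2$ being then the culprit is impossible if $X$ converges... ) — here is where I would instead invoke the local supermartingale hypothesis is \emph{not} available, so I would argue: since $\limsup_{t\uparrow\tau}X_t < \infty$ and $x - \log(1+x) \ge 0$ for $x > -1$, we have $Y \le X - \frac12[X,X]^c \le \limsup X - \frac12[X,X]^c$ eventually; thus $[X,X]^c_\tau = \infty \Rightarrow \lim Y_t = -\infty$. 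On $\{[X,X]^c_\tau < \infty\} \cap \{[X,X]_\tau = \infty\}$ we have $\sum_{s<\tau}(\Delta X_s)^2 = \infty$; combined with $\Delta X > -1$ and finitely many large jumps on each $\lc0,\tau_n\lc$, one shows $\sum_{s<\tau}(\Delta X_s - \log(1+\Delta X_s)) = \infty$ (since $x - \log(1+x) \sim x^2/2$ near $0$ and the jumps must accumulate near $0$), so the subtracted term in $Y$ diverges to $+\infty$ while $X$ stays bounded above, giving $\lim Y_t = -\infty$. Finally on $\{\lim_{t\uparrow\tau}X_t = -\infty\}$, using $Y \le X - \frac12[X,X]^c \le X$, we get $\lim Y_t = -\infty$ directly.

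\emph{Main obstacle.} The delicate point is the bookkeeping in the jump term $(x - \log|1+x|)*\mu^X$: controlling that its only "good-sign" contributions (the $O(x^2)$ pieces from small jumps) genuinely force divergence when $\sum(\Delta X)^2 = \infty$, while the finitely many large jumps on each $\lc0,\tau_n\lc$ contribute only a locally finite (possibly negative, but not $-\infty$ since $\Delta X > -1$) amount. Handling the exact constant $\frac12$ versus the $x^2/2$ asymptotics, and making sure the accumulation of infinitely many jumps near $\tau$ must be near zero (so that the asymptotic comparison applies), is the real content; the rest is routine manipulation of \eqref{eq:170206.1}.
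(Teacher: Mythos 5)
Your reverse inclusion is essentially sound, but the first inclusion as you finally assemble it does not prove the stated result. Contraposing ``if $\lim_{t\uparrow\tau}X_t$ exists in $\R$ and $[X,X]_\tau<\infty$ (and no jump equals $-1$), then $\Ecal(X)$ has a strictly positive limit'' only yields the inclusion into $\{\lim_{t\uparrow\tau}X_t \text{ does not exist in } \R\}\cup\{[X,X]_\tau=\infty\}\cup\{\Delta X_t=-1 \text{ for some } t\}$, and ``the limit does not exist in $\R$'' is strictly weaker than ``$\lim_{t\uparrow\tau}X_t=-\infty$'' (it allows oscillation or divergence to $+\infty$); no supermartingale structure is assumed in this half, so Lemma~\ref{L:181009} is not available to upgrade it. Your earlier direct attempt also has a soft spot: the claim that $(x-\log|1+x|)*\mu^X$ is ``eventually nondecreasing'' off the $-1$-jump event is unjustified, because jumps with $\Delta X<-1$ (which are allowed here) contribute negative terms and may accumulate at $\tau$ when $[X,X]_\tau=\infty$; moreover you split on $[X,X]^c_\tau$ rather than on $[X,X]_\tau$. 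The fix is short and uses exactly the ingredients you already wrote down: on $\{\lim_{t\uparrow\tau}\Ecal(X)_t=0\}\cap\{[X,X]_\tau<\infty\}\cap\{\Delta X_t\neq -1 \text{ for all } t<\tau\}$, the process $[X,X]^c$ converges, and since $\sum_{s<\tau}(\Delta X_s)^2<\infty$ forces finitely many jumps with $|\Delta X|\ge 1/2$ while $|x-\log|1+x||\le x^2$ for $|x|\le 1/2$, the term $(x-\log|1+x|)*\mu^X$ converges in $\R$; hence $X_t=Y_t+(\text{convergent terms})\to-\infty$ directly. (Equivalently, contrapose with the hypothesis $\limsup_{t\uparrow\tau}X_t>-\infty$ instead of ``the limit exists in $\R$'', concluding $\limsup_{t\uparrow\tau}\Ecal(X)_t>0$.) This corrected version is precisely the paper's argument, which uses $x-\log|1+x|\le x^2$ for $x\ge-\frac12$ and isolates the finitely many jumps below $-\frac12$.

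For the reverse inclusion your argument matches the paper's in substance: after disposing of the $-1$-jump event, you bound $\Ecal(X)=e^Y$ from above using the nonnegativity of $\frac12[X,X]^c$ and of $x-\log(1+x)$ for $x>-1$, and show the subtracted terms diverge under $\{[X,X]_\tau=\infty\}$ while $\limsup_{t\uparrow\tau}X_t<\infty$. The paper condenses your two cases ($[X,X]^c_\tau=\infty$ versus $\sum(\Delta X)^2=\infty$) into the single inequality $x-\log(1+x)\ge\frac14(x^2\wedge 1)$, giving $\Ecal(X)\le\exp\left(X-\frac12[X,X]^c-\frac14(x^2\wedge1)*\mu^X\right)$ together with the observation that $[X,X]^c_\tau+(x^2\wedge1)*\mu^X_\tau=\infty$ if and only if $[X,X]_\tau=\infty$. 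One imprecision in your jump case: the jumps need not ``accumulate near $0$'' --- infinitely many jumps bounded away from zero can pile up at $\tau$ --- but then each such jump contributes at least a fixed positive amount to $(x-\log(1+x))*\mu^X$, so the conclusion stands; the $(x^2\wedge1)$ truncation handles both situations at once. Your concern about matching the constant $\frac12$ is moot, since both subtracted processes are nonnegative and nondecreasing and only their divergence matters.
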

\begin{proof} 
Assume we are on the event 
\[
	\left\{\lim_{t\uparrow\tau}\Ecal(X)_t=0\right\} \cap \{[X,X]_{\tau}  < \infty\} \cap \big\{\Delta X_t \neq-1\text{ for all $t \in [0, \tau)$} \big\}.
\]
We need to argue that $\lim_{t\uparrow\tau}X_t  = -\infty$ on this event. To this end, observe that the inequality
\[
	x - \log|1+x| \leq x^2 \quad \text{for all $x \geq -\frac{1}2$}
\]
together with \eqref{eq:170206.1} yield, on this event, that
\begin{align*}
	-\infty &= \lim_{t\uparrow\tau} \left(X_t - \frac{1}{2}[X,X]^c_t - (x - \log|1+x|) * \mu^X_t\right)\\
		&\geq \lim_{t\uparrow\tau} \left(X_t - [X,X]_t - (x - \log|1+x|)\oo_{x < -1/2}  * \mu^X_t\right).
\end{align*}
By assumption, $[X,X]_\tau<\infty$. In particular, $X$ can only have finitely many jumps bounded away from zero. We deduce that the second and third terms on the right-hand side converge, and therefore $\lim_{t\uparrow\tau} X_t = - \infty$. This yields the first set inclusion.

We now assume that $\Delta X\ge-1$ and $\limsup_{t \uparrow \tau} X_t < \infty$, and prove the reverse set inclusion. On the event $\{\text{$\Delta X_t = -1$ for some $t \in [0, \tau)$}\}$, $X$ jumps to zero before $\tau$ and stays there, so that clearly $\lim_{t\uparrow\tau}\Ecal(X)_t=0$. If $\Delta X_t>-1$ for all $t\in[0,\tau)$, then \eqref{eq:170206.1} and the inequality $x-\log(1+x)\ge(x^2 \wedge 1)/4$ for all $x > -1$ give
\[
0 \le \Ecal(X)_t \le\exp\left( X_t - \frac12[X,X]^c_t - \frac14(x^2\wedge1)*\mu^X_t\right), \quad t\in[0,\tau).
\]
On the event $ \{\lim_{t\uparrow\tau}X_t  = -\infty\}$, the right-hand side converges to zero. The same thing happens on the event $\{[X,X]_{\tau}  = \infty\}$, thanks to the assumption that $\limsup_{t \uparrow \tau} X_t < \infty$ and the observation that $[X,X]^c_\tau+(x^2\wedge1)*\mu^X_\tau=\infty$ if and only if $[X,X]_\tau=\infty$. This concludes the proof of the reverse set inclusion.
\end{proof}

\subsection{Stochastic logarithms}
To be able to discuss stochastic logarithms,  recall that for a stopping time $\rho$ and an event $A\in\mathcal F$, the \emph{restriction of $\rho$ to $A$} is given by
\[
\rho(A) = \rho \oo_A + \infty \oo_{A^c}.
\]
Here $\rho(A)$ is a stopping time if and only if $A\in\mathcal F_\rho$.
Define now for a progressively measurable process $Z$ the running infimum of its absolute value by $\underline{Z} = \inf_{t \leq \cdot} |Z_t|$ and the stopping times\footnote{As the filtration might not be augmented by the nullsets, the following definitions might not be stopping times. However, there exist appropriate modifications of these random times which turns them into stopping times; see Appendix~A, in particular, Lemma~A.3, in \cite{Perkowski_Ruf_2014}. We shall always work with these modifications.} 
\begin{align}
\nonumber
\tau_0 &= \inf\left\{ t\ge 0: \underline{Z}_t = 0\right \} ;\\
\label{eq:tauC}
\tau_C &= \tau_0(A_C), \quad\quad A_C = \left\{\underline{Z}_{\tau-}=0\right\};\\
\tau_J &= \tau_0(A_J), \quad\quad A_J = \left\{\underline{Z}_{\tau_0-} > 0\right\}. \nonumber
\end{align}
These stopping times correspond to the two ways in which $Z$ can reach zero: either continuously or by a jump. We have the following well known property of $\tau_C$; see, e.g., Exercise~6.11.b in \citet{Jacod_book}.

\begin{lemma}\label{L:tauCpred}
Fix a progressively measurable process $Z$.
The stopping time $\tau_C$ of \eqref{eq:tauC} is foretellable.
\end{lemma}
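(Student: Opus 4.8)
The plan is to build an explicit announcing sequence for $\tau_C$ from the level--crossing times of the non-increasing process $\underline Z=\inf_{s\le\cdot}|Z_s|$. Working throughout with the modifications referred to in the footnote above, I may assume $\underline Z$ is a non-increasing, $[0,\infty)$--valued, right-continuous adapted process; in particular its left limits exist, $\underline Z_{\tau_0-}$ is $\Fcal_{\tau_0-}$--measurable, and hence $A_C=\{\underline Z_{\tau_0-}=0\}\in\Fcal_{\tau_0-}\subseteq\Fcal_{\tau_0}$, so that $\tau_C=\tau_0(A_C)$ is a stopping time and it remains to announce it. I would also record the elementary consequences $\underline Z_{\tau_0}=0$ on $\{\tau_0<\infty\}$, $\{\underline Z=0\}=\lc\tau_0,\infty\lc$, $\{\tau_0=0\}\subseteq A_C$, and therefore $\{\tau_C>0\}=A_C^c\cup(A_C\cap\{\tau_0>0\})$.

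Next I would introduce, for $n\in\N$, the stopping times $\sigma_n=\inf\{t\ge0:\underline Z_t<1/n\}$ and establish their basic features, all flowing from monotonicity and right-continuity of $\underline Z$: the sequence is non-decreasing, $\sigma_n\le\tau_0$, and $\sigma_n\uparrow\tau_0$ (if $\sigma_n\uparrow\sigma_\infty<\tau_0$ then $\underline Z_{\sigma_\infty}\le\inf_n\underline Z_{\sigma_n}=0$, forcing $\sigma_\infty\ge\tau_0$), while $\bigcap_n\{\sigma_n<\tau_0\}=A_C$ because $\underline Z_{\tau_0-}=\inf_{t<\tau_0}\underline Z_t$. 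The point that makes the construction work is that on $\{\sigma_n<\infty\}$ the event $\{\sigma_n=\tau_0\}$ coincides with $\{\underline Z_{\sigma_n}=0\}$, which lies in $\Fcal_{\sigma_n}$.

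I would then put $\tau_n=\sigma_n\,\oo_{B_n}+n\,\oo_{B_n^c}$, where $B_n=\big(\{\sigma_n<\tau_0\}\cap\{\sigma_n\le n\}\big)\cup\{\tau_0=0\}$, and check that $(\tau_n)_{n\in\N}$ is an announcing sequence for $\tau_C$. First, each $\tau_n$ is a stopping time: $B_n\in\Fcal_{\sigma_n}$ since $\{\sigma_n<\tau_0\},\{\sigma_n\le n\}\in\Fcal_{\sigma_n}$ and $\{\tau_0=0\}\in\Fcal_0$, and the only part of $B_n^c$ not automatically in $\Fcal_n$, namely $\{\sigma_n=\tau_0,\,0<\sigma_n\le n\}=\{\underline Z_{\sigma_n}=0,\,0<\sigma_n\le n\}$, does lie in $\Fcal_n$. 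Second, $(\tau_n)$ is non-decreasing and $\tau_n<\tau_C$ on $\{\tau_C>0\}$, both by a short case check using $\sigma_n\le\sigma_{n+1}\le\tau_0$. Finally $\tau_n\uparrow\tau_C$, checked in the three regimes partitioning $\Omega$: on $\{\tau_0=0\}$ one has $\sigma_n=0$, so $\tau_n\equiv0=\tau_C$; on $A_C\cap\{\tau_0>0\}$ one has $\sigma_n<\tau_0$ for all $n$, so $\sigma_n\wedge n\le\tau_n\le\tau_0$ with $\sigma_n\wedge n\uparrow\tau_0=\tau_C$; and on $A_C^c$ one has $\sigma_n=\tau_0$ for all large $n$, so $\omega\notin B_n$ eventually and $\tau_n=n\uparrow\infty=\tau_C$.

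I expect the main obstacle to be not the underlying idea --- which is just the familiar device of freezing the level--crossing times once they reach the debut $\tau_0$ --- but the bookkeeping needed to turn every $\tau_n$ into a genuine stopping time of the (not necessarily augmented) filtration while simultaneously handling the degenerate cases $\tau_0=0$, $\tau_0=\infty$ and $\sigma_n=\infty$; here the truncation at $n$ and the identity $\{\sigma_n=\tau_0\}=\{\underline Z_{\sigma_n}=0\}$ on $\{\sigma_n<\infty\}$ are precisely what keep the delicate events inside $\Fcal_n$ and $\Fcal_{\sigma_n}$. Alternatively one may simply invoke Exercise~6.11.b in \citet{Jacod_book}.
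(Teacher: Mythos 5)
Your construction is correct and is essentially the paper's own argument: both announce $\tau_C$ by the level--crossing times of $\underline Z$, retained only on the event that $\underline Z$ is still strictly positive at the crossing (equivalently, the crossing occurs strictly before $\tau_0$) and replaced by the deterministic time $n$ otherwise. The differences --- strict versus non-strict threshold, your event $B_n$ in place of the paper's restriction $\sigma_n'(A_n)$ with $A_n=\{\underline Z_{\sigma_n'}>0\}$, and your explicit handling of $\{\tau_0=0\}$ --- are bookkeeping, not a different route.
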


\begin{proof}
We claim that an announcing sequence $(\sigma_n)_{n\in\mathbb N}$ for $\tau_C$ is given by
\begin{align*}
	\sigma_n = n\wedge\sigma'_n(A_n), \qquad \sigma'_n = n \wedge \inf\left\{t\ge 0 : \underline{Z}_t  \leq \frac{1}{n} \right\}, \qquad A_n = \left\{\underline{Z}_{\sigma_n'} > 0\right\}.
\end{align*}
To prove this, we first observe that $\sigma_n = n < \infty = \tau_C$ on $A_n^c$ for all $n \in \N$. Moreover, we have $\sigma_n = \sigma_n'< \tau_C$ on $A_n$ for all $n \in \N$, where we used that $\underline{Z}_{\tau_C-} = 0$ on the event $\{\tau_C < \infty\}$. We need to show that $\lim_{n \uparrow \infty} \sigma_n = \tau_C$.  On the event $A_C$, see \eqref{eq:tauC}, we have $\tau_C = \tau_0 = \lim_{n \uparrow \infty} \sigma_n' = \lim_{n \uparrow \infty} \sigma_n$ since $A_C \subset A_n$ for all $n \in \N$. On the event $A_C^c = \bigcup_{n =1}^\infty A_n^c$, we have $\tau_C = \infty = \lim_{n \uparrow \infty} n = \lim_{n \uparrow \infty} \sigma_n$. Hence $(\sigma_n)_{n \in \N}$ is an announcing sequence of $\tau_C$, as claimed.
\end{proof}

If a  semimartingale $Z$  reaches zero continuously, the process $H = \frac{1}{Z_-}\1{Z_-\neq0}$ explodes in finite time, and is therefore not left-continuous. In fact, it is not $Z$--integrable. However, if we view $Z$ as a semimartingale on the stochastic interval $\lc 0,\tau_C\lc$, then $H$ is $Z$--integrable in the sense of stochastic integration on stochastic intervals, as introduced in Section~\ref{S:2}. Thus $H \cdot Z$ exists as a semimartingale on~$\lc0,\tau_C\lc$, which we call the stochastic logarithm of $Z$.

\begin{definition}[Stochastic logarithm]\label{D:stochLog}\footnote{See also the Addendum, which discusses an additional requirement on $Z$ for the definition to be correct.}
Let $\tau$ be a foretellable time and $Z$ be a progressively measurable process such that $\tau \leq \tau_C$ and  such that $Z$ is a semimartingale on $\lc 0,\tau\lc$.
The  semimartingale $\mathcal L(Z)$ on $\lc0,\tau\lc$ defined by
\begin{align*}
\mathcal L( Z ) = \frac{1}{Z_-}\1{Z_- \neq 0} \cdot Z \qquad \text{on $\lc 0,\tau\lc$}
\end{align*}
is called the \emph{stochastic logarithm of $Z$ (on $\lc 0, \tau\lc$)}.\qed
\end{definition}

\subsection{The relationship of stochastic exponentials and logarithms}

\begin{theorem} \label{T:181008}
Let $\tau$ be a foretellable time. We then have the following two statements.
\begin{enumerate}
	\item \footnote{See also the Addendum, which discusses an additional requirement on $Z$ for this statement to be correct.}
	Let $Z$ be a progressively measurable process with $Z_0 =  1$ such that  $\tau \leq \tau_C$,  $Z$ is a semimartingale on $\lc 0,\tau\lc$, and $Z = 0$ on $\lc \tau, \infty\lc$. Then
	\[
		Z = \mathcal E(\mathcal L(Z)).
	\]
	\item  Let $X$ be a semimartingale on $\lc 0, \tau \lc$ with $X_0 = 0$ such that $X$ stays constant after its first jump by $-1$. Then $\mathcal E(X)$ is a semimartingale on $\lc 0, \tau \lc$, does not hit zero continuously strictly before $\tau$, and satisfies
	\[
		X = \mathcal L(\mathcal E(X)) \qquad \text{on $\lc 0,\tau\lc$}.
	\]
	\end{enumerate}
\end{theorem}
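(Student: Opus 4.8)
The plan is to reduce both identities to the characterization of $\mathcal E$ as the unique solution, on $\lc 0,\tau\lc$, of the linear equation~\eqref{eq:170203.1}, together with associativity of the stochastic integral on stochastic intervals. In each case the one substantive point is that an integrand supported on the predictable set where a left-limit process vanishes contributes nothing.

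For part~(i), set $X=\mathcal L(Z)$, so $X_0=0$ and $e^{X_0}=1=Z_0$. By the definition of $\mathcal L(Z)$ and associativity,
\[
Z_-\cdot X \;=\; Z_-\cdot\Big(\tfrac{1}{Z_-}\1{Z_-\neq0}\cdot Z\Big) \;=\; \1{Z_-\neq0}\cdot Z \qquad\text{on }\lc 0,\tau\lc,
\]
so it suffices to show that $\1{Z_-=0}\cdot Z=0$ on $\lc 0,\tau\lc$; indeed this gives $Z=Z_0+\1{Z_-\neq0}\cdot Z=1+Z_-\cdot X$ on $\lc 0,\tau\lc$, hence $Z=\mathcal E(X)$ there by uniqueness for~\eqref{eq:170203.1}, while both sides vanish on $\lc\tau,\infty\lc$ by hypothesis and Definition~\ref{D:stochExp}. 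To obtain $\1{Z_-=0}\cdot Z=0$, let $\varsigma=\inf\{t\ge0:Z_t=0\}$. On $\{\varsigma\ge\tau\}$ the set $\{Z_-=0\}\cap\lc 0,\tau\lc$ is empty: a continuous approach of $Z$ to $0$ at a time strictly before $\tau$ is excluded by $\tau\le\tau_C$, since it would put us on $A_C$ with $\tau_C<\tau$. On $\{\varsigma<\tau\}$ the same exclusion forces $\varsigma=\tau_J$, so $Z$ jumps into $0$ at $\varsigma$ with $Z_{\varsigma-}\neq0$; and $Z$ cannot slide back out of $0$ afterwards inside $\lc 0,\tau\lc$, since that would make $\tfrac{1}{Z_-}\1{Z_-\neq0}$ fail to be $Z$-integrable there, contradicting the very definition of $\mathcal L(Z)$. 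Thus $Z$ is absorbed at $0$ on $\lc\varsigma,\tau\lc$, and since $Z_{\varsigma-}\neq0$ the indicator $\1{Z_-=0}$ vanishes wherever $Z$ actually moves; hence $\1{Z_-=0}\cdot Z=0$.

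For part~(ii), first $\mathcal E(X)$ is a semimartingale on $\lc 0,\tau\lc$ because, for an announcing sequence $(\tau_n)$, it agrees on each $\lc 0,\tau_n\rc$ with the classical Dol\'eans--Dade exponential of the semimartingale $X^{\tau_n}$. Let $\sigma$ be the first time with $\Delta X=-1$. From~\eqref{eq:D:stochExp}, within $\lc 0,\tau\lc$ the process $\mathcal E(X)$ vanishes precisely from $\sigma$ onward, jumping into $0$ at $\sigma$; since $\Delta X\neq-1$ on $\lc 0,\sigma\lc$ its left limit $\mathcal E(X)_{\sigma-}$ is nonzero (the standard non-vanishing property of Dol\'eans--Dade exponentials before their first $-1$-jump), so $\mathcal E(X)_-$ vanishes within $\lc 0,\tau\lc$ only strictly after $\sigma$. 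In particular $\mathcal E(X)$ never reaches $0$ continuously strictly before $\tau$, which moreover gives $\tau\le\tau_C$ for $\mathcal E(X)$, so that $\mathcal L(\mathcal E(X))$ is defined on $\lc 0,\tau\lc$. Now~\eqref{eq:170203.1} gives $\mathcal E(X)=1+\mathcal E(X)_-\cdot X$ on $\lc 0,\tau\lc$, whence, exactly as above,
\[
\mathcal L(\mathcal E(X)) \;=\; \tfrac{1}{\mathcal E(X)_-}\1{\mathcal E(X)_-\neq0}\cdot\big(\mathcal E(X)_-\cdot X\big) \;=\; \1{\mathcal E(X)_-\neq0}\cdot X \qquad\text{on }\lc 0,\tau\lc.
\]
Since $\mathcal E(X)_-$ vanishes within $\lc 0,\tau\lc$ only at times $t>\sigma$, where $X$ is constant by hypothesis, and since $\mathcal E(X)_{\sigma-}\neq0$, the indicator $\1{\mathcal E(X)_-=0}$ meets $X$ only where $X$ does not move; hence $\1{\mathcal E(X)_-=0}\cdot X=0$ and therefore $\mathcal L(\mathcal E(X))=X-X_0=X$ on $\lc 0,\tau\lc$.

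The main obstacle, common to both parts, is verifying that the relevant process ($Z$ in part~(i), $\mathcal E(X)$ in part~(ii)) is absorbed at $0$ inside $\lc 0,\tau\lc$, so that the indicator of the set where its left-limit vanishes does not interfere with the driving integrator; the associativity steps and the appeals to~\eqref{eq:170203.1} are then routine. For $\mathcal E(X)$ this is essentially read off~\eqref{eq:D:stochExp} and the freezing hypothesis on $X$, once one records that $\mathcal E(X)_{\sigma-}\neq0$. For $Z$ it is more delicate: one must combine $\tau\le\tau_C$, which rules out a continuous approach to $0$ before $\tau$, with the integrability of $\tfrac{1}{Z_-}\1{Z_-\neq0}$ underlying the definition of $\mathcal L(Z)$, which rules out $Z$ sliding back out of $0$ after a jump into it, and one must separately check $Z_{\varsigma-}\neq0$ so that the edge time $\varsigma$ does not contribute to the jump part of $\1{Z_-=0}\cdot Z$.
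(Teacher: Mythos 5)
Your overall route is the paper's route: reduce both identities to uniqueness of solutions of \eqref{eq:170203.1} together with associativity of stochastic integration on stochastic intervals. Part (ii) is correct and essentially identical to the paper's argument; the only cosmetic difference is that you verify the ``no continuous hit of zero'' claim directly from \eqref{eq:D:stochExp} (non-vanishing of the Dol\'eans--Dade exponential before the first $-1$ jump), where the paper cites Proposition~\ref{P:1'''}; both work. You also correctly isolate the one substantive point in part (i), namely $\1{Z_-=0}\cdot Z=0$ on $\lc 0,\tau\lc$, which is exactly the step the paper's proof passes over silently when it writes $Z=1+Z_-(Z_-)^{-1}\1{Z_-\neq0}\cdot Z$.

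However, your justification of that point fails. The claim that $Z$ ``cannot slide back out of $0$'' after $\varsigma$ because otherwise $\tfrac1{Z_-}\1{Z_-\neq0}$ would not be $Z$--integrable is wrong: the indicator makes this integrand vanish on $\{Z_-=0\}$, so its integrability is never threatened by $Z$ leaving zero; integrability only sees the region where $Z_-$ is away from zero. Moreover, absorption at zero genuinely cannot be deduced from the stated hypotheses: take the deterministic process with $Z_t=1$ on $[0,1)$, $Z_t=0$ on $[1,2)$, $Z_t=1$ on $[2,\infty)$, and $\tau=\infty$. Then $\tau_0=1$ but $\underline{Z}_{\tau_0-}=1>0$, so $\tau_C=\infty\geq\tau$, the condition ``$Z=0$ on $\lc\tau,\infty\lc$'' is vacuous, and yet $\1{Z_-=0}\cdot Z\neq0$; indeed $\mathcal E(\mathcal L(Z))$ is absorbed at zero from time $1$ on while $Z$ is not, so the asserted identity itself fails for such a $Z$. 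The correct reading is that $Z$ is (implicitly) assumed absorbed at zero, i.e.\ $Z=Z^{\tau_0}$ --- which is automatic in the intended application, since every $Z\in\mathfrak Z$ in Theorem~\ref{T:SE} satisfies $Z=Z^{\tau_0}$. With that absorption in hand, the rest of your part (i) (continuous approach to zero before $\tau$ excluded by $\tau\le\tau_C$; the edge time contributes nothing since $Z_{\varsigma-}\neq0$) is fine; without it, no integrability argument can close the gap, so this step must be assumed or imported, not derived as you attempt.
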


\begin{proof} We start by proving (i).
	By assumption and by Definition~\ref{eq:D:stochExp}, both sides are zero on $\lc \tau, \infty\lc$.  Moreover, $Z$ satisfies the equation 
	\[
		Z = 1 + Z_-(Z_-)^{-1} \1{Z_-\neq 0} \cdot Z = 1 + Z_-  \cdot \mathcal{L}(Z) \qquad \text{on $\lc 0, \tau\lc$,}
	\]
	 whose unique solution is $\Ecal(\mathcal L(Z))$ on $\lc0,\tau\lc$.
	 
	 We next prove (ii). Note that $\mathcal E(X)$ is clearly a semimartingale on $\lc 0, \tau\lc$, which also does not hit zero continuously strictly before $\tau$, thanks to Proposition~\ref{P:1'''}. Then the definition of stochastic logarithm along with \eqref{eq:170203.1} yield
	 \begin{align*}
	 	\mathcal L(\mathcal E(X)) &= \frac{1}{\mathcal E(X)_-} \1{\mathcal E(X)_- \neq 0} \cdot \mathcal E(X) = \frac{1}{\mathcal E(X)_-} \1{\mathcal E(X)_- \neq 0}  \mathcal E(X)_- \cdot X \\
		&=  \1{\mathcal E(X)_- \neq 0}  \cdot X = X \qquad \text{on $\lc 0,\tau\lc$},
	 	\end{align*}
		where the last equality follows from the fact that $X$ stays constant after it jumps by $-1$. 
\end{proof}

\subsection{The special case of local supermartingales}
Consider now the case where $X$ is a local supermartingale on $\lc0,\tau\lc$ with $\Delta X \geq -1$. Then $\Ecal(X)$ is also a local supermartingale on $\lc0,\tau\lc$ due to its positivity and \eqref{eq:170203.1}. Moreover, the same argument as in the proof of Proposition~\ref{P:supermg conv} yields  that $\Ecal(X)$ is in fact a supermartingale globally, i.e.~on $\lc0,\infty\lc$.

\begin{remark}\label{R:181009} 
	We can now provide an alternative proof of the inclusion ``$\subset$'' in Lemma~\ref{L:181009} under the additional assumption that $\Delta X > -1$. 
	Thanks to Proposition~\ref{P:1'''}, it suffices to show 
	\[
	\left\{\lim_{t\uparrow\tau} X_t  \text{ does not exist in $\R$} \right\} \subset \left\{\lim_{t\uparrow\tau}\Ecal(X)_t=0\right\}.
	\]
	As in the proof of the inclusion ``$\supset$'', we  deduce
\[
\left\{\lim_{t\uparrow\tau}X_t \text{ does not exist in $\R$}\right\} \subset \bigcap_{m\in\N} \{X \ge -m\}^c \subset \left\{\liminf_{t\uparrow\tau} X_t = -\infty\right\}.
\]
Since \eqref{eq:170206.1} yields $0\le\Ecal(X) \le \bm1_{\lc0,\tau\lc}e^X$ and the limit $\lim_{t\uparrow\tau} \Ecal(X)_t $ exists by Proposition~\ref{P:supermg conv}, the inclusion follows.
\qed
\end{remark}

Nonnegative supermartingales $Z$ can be associated to a probability measure; see Chapter~11 in \cite{Chung:Walsh} in the context of so-called $h$-transforms, or \cite{Perkowski_Ruf_2014} in the general context. Girsanov then provides the drift correct correction for a process $Y$ as the quadratic covariation of $Y$ and $\mathcal L(X)$, namely $[Y, \mathcal L(X)]$. Hence, it is helpful to understand well the connection between a nonnegative supermartingale and its stochastic logarithm.

To this end, we now want to make Theorem~\ref{T:181008} more concrete, namely to work out the relationship of stochastic exponentials and logarithms in the local supermartingale case. The following definition will be helpful.
\begin{definition}[Maximality]\label{D:maximal}
Let $\tau$ be a foretellable time, and let $X$ be a semimartingale on $\lc0,\tau\lc$. We say that $\tau$ is {\em $X$--maximal} if the inclusion $$\{\tau<\infty\}\subset  \left\{\lim_{t\uparrow\tau} X_t  \text{ does not exist in $\R$} \right\}$$ holds almost surely. \qed
\end{definition}


Let now $\mathfrak{Z}$ be the set of all nonnegative supermartingales $Z$ with $Z_0 = 1$. Any such process~$Z$ automatically satisfies $Z = Z^{\tau_0}$. Furthermore, let $\mathfrak L$ denote the set of all stochastic processes~$X$ satisfying the following conditions:
\begin{enumerate}
\item $X$ is a local supermartingale on $\lc0,\tau\lc$ for some foretellable, $X$--maximal time $\tau$.
\item $X_0=0$, $\Delta X\ge -1$ on $\lc0,\tau\lc$, and $X$ is constant after the first time $\Delta X=-1$.
\end{enumerate}

The next theorem extends the classical correspondence between strictly positive local martingales and local martingales with jumps strictly greater than~$-1$. The reader is referred to Proposition~I.5 in \citet{Lepingle_Memin_Sur} and Appendix~A of \citet{K_balance} for related results. In both of these references, the local martingale is not allowed to hit zero continuously.

\begin{theorem}[Relationship of stochastic exponential and logarithm] \label{T:SE}
The stochastic exponential $\mathcal{E}$ is a bijection from $\mathfrak{L}$ to $\mathfrak{Z}$, and its inverse is the stochastic logarithm $\mathcal L$.  Suppose $Z = \mathcal{E}(X)$ for some $Z\in\mathfrak Z$ and $X \in \mathfrak{L}$. Then $\tau = \tau_C$, where $\tau$ is the foretellable $X$--maximal time corresponding to $X$, and $\tau_C$ is given by~\eqref{eq:tauC}.
\end{theorem}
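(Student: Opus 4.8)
The plan is to prove, in order, (a) $\mathcal E(\mathfrak L)\subset\mathfrak Z$, (b) $\mathcal L(\mathfrak Z)\subset\mathfrak L$, (c) $\mathcal E\circ\mathcal L=\mathrm{id}_{\mathfrak Z}$, and (d) $\mathcal L\circ\mathcal E=\mathrm{id}_{\mathfrak L}$, where the equality $\tau=\tau_C$ emerges inside (d); taken together, (a)--(d) exhibit $\mathcal E$ as a bijection from $\mathfrak L$ onto $\mathfrak Z$ with inverse $\mathcal L$. Throughout, for $Z\in\mathfrak Z$ the symbol $\mathcal L(Z)$ is understood as the stochastic logarithm on $\lc0,\tau_C\lc$ in the sense of Definition~\ref{D:stochLog}; this is well posed because $\tau_C$ is foretellable by Lemma~\ref{L:tauCpred} and $Z$, being a supermartingale, is a semimartingale on $\lc0,\tau_C\lc$.

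For (a): if $X\in\mathfrak L$ with corresponding foretellable, $X$-maximal time $\tau$, then $\mathcal E(X)\ge0$ since $\Delta X\ge-1$ makes every factor in~\eqref{eq:D:stochExp} nonnegative, $\mathcal E(X)_0=e^{X_0}=1$, and, by the reasoning recalled just above Remark~\ref{R:181009} (which mimics the proof of Proposition~\ref{P:supermg conv}), $\mathcal E(X)$ is a supermartingale on $\lc0,\infty\lc$; hence $\mathcal E(X)\in\mathfrak Z$. For (b): given $Z\in\mathfrak Z$, set $X=\mathcal L(Z)$. Along the announcing sequence of Lemma~\ref{L:tauCpred} the nonnegative integrand $\frac1{Z_-}\1{Z_-\ne0}$ is bounded, so $X$ is a local supermartingale on $\lc0,\tau_C\lc$; moreover $X_0=0$, and $\Delta X_t=Z_t/Z_{t-}-1\ge-1$ when $Z_{t-}\ne0$ (and $\Delta X_t=0$ otherwise), with $\Delta X_t=-1$ possible only when $Z_t=0$, which on $\lc0,\tau_C\lc$ happens only at $\tau_0$ on the event $A_C^c$, where afterwards $Z\equiv0$ and so $X$ is constant. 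That $\tau_C$ is $X$-maximal is seen by writing $Z=\mathcal E(X)$ via Theorem~\ref{T:181008}(i): on $\{\tau_C<\infty\}\subset A_C$ one has $Z>0$ on $[0,\tau_C)$ and $Z_{\tau_C-}=0$, so $\lim_{t\uparrow\tau_C}\mathcal E(X)_t=0$ with $X$ having no jump by $-1$ before $\tau_C$, and Proposition~\ref{P:1'''} together with Lemma~\ref{L:181009} place $\{\tau_C<\infty\}$ inside $\{\lim_{t\uparrow\tau_C}X_t\text{ does not exist in }\R\}$. Thus $X\in\mathfrak L$.

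Fact (c) is Theorem~\ref{T:181008}(i) with the foretellable time taken to be $\tau_C$: since $Z$ vanishes after $\tau_0\le\tau_C$ on $A_C$ and $\tau_C=\infty$ on $A_C^c$, one has $Z=0$ on $\lc\tau_C,\infty\lc$, so $\mathcal E(\mathcal L(Z))=Z$. For (d), fix $X\in\mathfrak L$ with maximal time $\tau$, put $Z=\mathcal E(X)\in\mathfrak Z$, and let $\tau_C$ be built from $Z$ through~\eqref{eq:tauC}. Theorem~\ref{T:181008}(ii) already delivers the inclusion $\tau\le\tau_C$ (this is exactly the assertion that $\mathcal E(X)$ does not hit zero continuously strictly before $\tau$) along with $\mathcal L(\mathcal E(X))=X$ on $\lc0,\tau\lc$; hence, once $\tau=\tau_C$ is shown, the stochastic logarithm of $Z$ on $\lc0,\tau_C\lc$ equals $X$, which gives (d) and simultaneously the last assertion of the theorem. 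The equality $\tau=\tau_C$ reduces to proving $Z_{\tau-}:=\lim_{t\uparrow\tau}Z_t=0$ on $\{\tau<\infty\}$: on that event $X$-maximality precludes a jump of $X$ by $-1$ before $\tau$ (such a jump would freeze $X$ and make $\lim_{t\uparrow\tau}X_t$ finite), so $Z$ and $Z_-$ are strictly positive on $\lc0,\tau\lc$; together with $Z_\tau=0$ this forces $\tau_0=\tau$, and $Z_{\tau-}=0$ then gives $\underline Z_{\tau-}=0$, that is, $\tau\in A_C$ and $\tau_C=\tau$; on $\{\tau=\infty\}$ a direct check (distinguishing whether or not $X$ jumps by $-1$) gives $\tau_C=\infty=\tau$.

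The main obstacle is the limit statement $Z_{\tau-}=0$ on $\{\tau<\infty\}$, because the reverse inclusion of Proposition~\ref{P:1'''} is unavailable: $\limsup_{t\uparrow\tau}X_t$ need not be finite. The resolution is to apply Proposition~\ref{P:supermg conv} not to $X$ but to the nonnegative local supermartingale $Z=\mathcal E(X)$ on $\lc0,\tau\lc$; this yields that $Z_{\tau-}$ exists in $\R$ and that $[Z,Z]_\tau<\infty$. From $Z=1+Z_-\cdot X$ (see~\eqref{eq:170203.1}) we get $[Z,Z]=Z_-^2\cdot[X,X]$, and on the event $\{Z_{\tau-}>0\}$ the process $Z$ is strictly positive on $\lc0,\tau\lc$ with $Z_-$ bounded below there by some $\varepsilon>0$ (indeed $Z_-\to Z_{\tau-}>0$, and $Z$ cannot have jumped to zero before $\tau$, as that would force $Z_{\tau-}=0$); consequently $[X,X]_\tau\le\varepsilon^{-2}[Z,Z]_\tau<\infty$ on $\{Z_{\tau-}>0\}$. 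By Lemma~\ref{L:181009}, $\{[X,X]_\tau<\infty\}\cap\{\lim_{t\uparrow\tau}X_t\text{ does not exist in }\R\}\subset\{\lim_{t\uparrow\tau}X_t=-\infty\}$, and on the latter set the bound $0\le\mathcal E(X)\le\oo_{\lc0,\tau\lc}e^X$ (valid since $\Delta X\ge-1$, cf.~\eqref{eq:170206.1}) forces $Z_{\tau-}=0$. Therefore $\{Z_{\tau-}>0\}$ is disjoint from $\{\lim_{t\uparrow\tau}X_t\text{ does not exist in }\R\}$, which by $X$-maximality contains $\{\tau<\infty\}$; hence $Z_{\tau-}=0$ on $\{\tau<\infty\}$, completing the argument.
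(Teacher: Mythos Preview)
Your proof is correct and follows the same overall architecture as the paper: verify $\mathcal E(\mathfrak L)\subset\mathfrak Z$, $\mathcal L(\mathfrak Z)\subset\mathfrak L$, and invoke Theorem~\ref{T:181008} for the two round-trip identities, with the crux being the equality $\tau=\tau_C$.

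The one substantive difference lies in how you obtain $Z_{\tau-}=0$ on $\{\tau<\infty\}$. The paper's one-line reference to Proposition~\ref{P:1'''}, Lemma~\ref{L:181009} and $X$--maximality implicitly rests on the argument of Remark~\ref{R:181009}: $X$--maximality together with the stopping argument in that remark gives $\{\tau<\infty\}\subset\{\liminf_{t\uparrow\tau}X_t=-\infty\}$, and then $0\le\mathcal E(X)\le e^X$ plus the existence of $\lim_{t\uparrow\tau}\mathcal E(X)_t$ (Proposition~\ref{P:supermg conv}) forces that limit to be zero. You correctly flag that the reverse inclusion of Proposition~\ref{P:1'''} cannot be used directly (it needs $\limsup_{t\uparrow\tau}X_t<\infty$), and instead argue by contradiction: apply Proposition~\ref{P:supermg conv} to $Z$ to get $[Z,Z]_\tau<\infty$, observe that on $\{Z_{\tau-}>0\}$ the integrand $Z_-$ is bounded away from zero so $[X,X]_\tau<\infty$, and then Lemma~\ref{L:181009} forces $\lim_{t\uparrow\tau}X_t=-\infty$ on that event, whence $Z_{\tau-}=0$ after all. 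Both routes end with the same estimate $\mathcal E(X)\le e^X$; yours reaches $\lim X_t=-\infty$ via finiteness of $[X,X]_\tau$, the paper's via $\liminf X_t=-\infty$ directly. Your detour is a little longer but has the merit of making explicit why the hypothesis $\limsup X_t<\infty$ in Proposition~\ref{P:1'''} is not needed here.
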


\begin{proof}
By Theorem~\ref{T:181008}, we have  $\Ecal\circ\Lcal={\rm id}$ and $\Lcal\circ\Ecal={\rm id}$.

Next, $\Ecal$ maps each $X\in\mathfrak L$ to some $Z\in\mathfrak Z$ with $\tau_C=\tau$. Since $Z=\Ecal(X)$ is a nonnegative supermartingale with $Z_0=1$, we have $Z\in\mathfrak Z$ and only need to argue that $\tau_C=\tau$. By Theorem~\ref{T:181008}(ii), we have $\tau \leq \tau_C$.  The reverse inequality follows from Proposition~\ref{P:1'''}, Lemma~\ref{L:181009}, and the $X$--maximality of $\tau$.

Further, $\Lcal$ maps each $Z\in\mathfrak Z$ to some $X\in\mathfrak L$ with $\tau=\tau_C$. Indeed, $X = \mathcal{L}(Z)$ is a local supermartingale on $\lc0, \tau_C\lc$ with $X_0 = 0$,  $\Delta X=Z / Z_--1\ge-1$ on $\lc0,\tau_C\lc$, and $X$ is constant after the first time $\Delta X=-1$. It remains to check that $\tau=\tau_C$ is $X$--maximal. To this end, observe that $Z=\Ecal(\Lcal(Z))=\Ecal(X)$. Thus on $\{\tau<\infty\}$ we have $\tau_J=\infty$ and hence $\Delta X>-1$. It follows from Proposition~\ref{P:1'''} and Lemma~\ref{L:181009} that
\begin{align*}
\{\tau < \infty\} &\subset \left\{ \lim_{t\uparrow\tau} Z_t = 0\right\} \cap \{\Delta X>-1\} \subset  \left\{\lim_{t\uparrow\tau}X_t  = -\infty\right\} \cup \left\{[X,X]_{\tau}  = \infty\right\} \\
& \subset  \left\{\lim_{t\uparrow\tau} X_t  \text{ does not exist in $\R$} \right\} ,
\end{align*}
hence $\tau$ is $X$--maximal as claimed.
\end{proof}

\section{Reciprocals of stochastic exponentials}  \label{S:4}

Reciprocals of stochastic exponentials appear naturally in connection with changes of probability measures. We now develop some identities related to such reciprocals. The following function plays an important role:
\begin{equation*}
\phi: (-1,\infty) \to (-1,\infty), \qquad \phi(x) = -1 + \frac{1}{1+x}.
\end{equation*}
Note that $\phi$ is an involution, that is, $\phi(\phi(x))=x$. The following notation is convenient: Given functions $F:\Omega\times\R_+\times\R\to\R$ and $f:\R\to\R$, we write $F\circ f$ for the function $(\omega,t,x)\mapsto F(\omega,t,f(x))$. We now identify the reciprocal of a stochastic exponential or, more precisely, the stochastic logarithm of this reciprocal. Part of the following result is contained in Lemma~3.4 of~\citet{KK}.

\begin{theorem}[Reciprocal of a stochastic exponential] \label{T:reciprocal}
Let $\tau$ be a foretellable time, and let $X$ be a semimartingale on $\lc 0,\tau\lc$. 
Define the semimartingale 
\begin{align}  \label{eq:defN}
	Y = -X + [X, X]^c +  \frac{x^2}{1+x}\oo_{x\neq-1} * \mu^X \qquad \text{on $\lc0,\tau \lc$.}
\end{align}
Then $\mathcal{E}(X) \mathcal{E}(Y) = 1$ on $\lc0,\tau \wedge \tau_J\lc$. Furthermore, for any nonnegative function  $G:\Omega\times\R_+\times\R\to\R_+$ we have 
	\begin{align}  \label{eq:fmu}
		G * \mu^Y = (G\circ \phi) * \mu^X \qquad \text{on $\lc0,\tau \wedge \tau_J\lc$.}
	\end{align}
\end{theorem}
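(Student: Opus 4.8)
The plan is to reduce both assertions to two ingredients: the fact that on $\lc 0,\tau\wedge\tau_J\lc$ the process $\Ecal(X)$ and its left limit $\Ecal(X)_-$ are nonzero---equivalently, $X$ has no jump equal to $-1$ there---and a direct computation of $[X,Y]$ from the decomposition \eqref{eq:defN}. I begin with the first ingredient. Set $R=\inf\{t\ge0:\Delta X_t=-1\}$. On each $\lc 0,\tau_n\lc$ the process $\Ecal(X)$ is the classical Dol\'eans-Dade exponential of $X^{\tau_n}$; since $\exp(X-\frac12[X,X]^c)$ is strictly positive on $\lc 0,\tau\lc$, while the infinite product in \eqref{eq:D:stochExp} and its left limit converge to nonzero values as long as all of their factors are nonzero, the process $\Ecal(X)$ is nonzero on $\lc 0,R\lc$ and $\Ecal(X)_{R-}\neq0$ on $\{R<\tau\}$---so $\Ecal(X)$ reaches zero there by a jump and $\inf_{s<R}|\Ecal(X)_s|>0$. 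Taking $Z=\Ecal(X)$ in \eqref{eq:tauC}, we get $\tau_0=R$ and $\underline{Z}_{\tau_0-}>0$ on $\{R<\tau\}$, hence $\tau_J=R$ there; together with $\tau\wedge\tau_J\le\tau\le R$ on $\{R\ge\tau\}$ this gives $\tau\wedge\tau_J\le R$ always, i.e.\ $\lc 0,\tau\wedge\tau_J\lc\subset\lc 0,R\lc$. On the latter interval $X$ has no jump by $-1$, $\Ecal(X)$ is nonzero, and therefore so is $\Ecal(X)_-=\Ecal(X)/(1+\Delta X)$.

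For the identity $\Ecal(X)\Ecal(Y)=1$, I would invoke the product formula $\Ecal(X)\Ecal(Y)=\Ecal(X+Y+[X,Y])$, which holds on $\lc 0,\tau\lc$ by stopping at an announcing sequence and appealing to its classical counterpart for semimartingales. By \eqref{eq:defN} we have $Y=-X+[X,X]^c+J$ with $J:=\frac{x^2}{1+x}\oo_{x\neq-1}*\mu^X$ a pure-jump process of finite variation. Using bilinearity of the covariation, $[X,[X,X]^c]=0$, and $[X,J]_t=\sum_{s\le t}\Delta X_s\,\Delta J_s$, one obtains on the set $\{\Delta X\neq-1\}$---which, by the first step, contains $\lc 0,\tau\wedge\tau_J\lc$---the identity
\[
[X,Y]=-[X,X]^c-\sum_{s\le\cdot}(\Delta X_s)^2+\sum_{s\le\cdot}\frac{(\Delta X_s)^3}{1+\Delta X_s}=-[X,X]^c-J,
\]
the last equality since $\frac{x^3}{1+x}-x^2=-\frac{x^2}{1+x}$. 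Hence $X+Y+[X,Y]=0$ on $\lc 0,\tau\wedge\tau_J\lc$. Because $X+Y+[X,Y]$ vanishes there and the right-hand side of \eqref{eq:D:stochExp} at time $t$ depends only on the integrator's restriction to $[0,t]$, it follows that $\Ecal(X)\Ecal(Y)=\Ecal(X+Y+[X,Y])=\Ecal(0)=1$ on $\lc 0,\tau\wedge\tau_J\lc$.

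For \eqref{eq:fmu}, combining \eqref{eq:defN} with $\Delta X\neq-1$ on $\lc 0,\tau\wedge\tau_J\lc$ gives there
\[
\Delta Y_s=-\Delta X_s+\frac{(\Delta X_s)^2}{1+\Delta X_s}=\frac{-\Delta X_s}{1+\Delta X_s}=\phi(\Delta X_s).
\]
Since $\phi$ is a bijection of $\R\setminus\{-1\}$ onto itself that fixes $0$, we have $\Delta Y_s\neq0$ exactly when $\Delta X_s\neq0$, so that for any nonnegative $G$ and every $t<\tau\wedge\tau_J$,
\[
G*\mu^Y_t=\sum_{s\le t}G(s,\Delta Y_s)\1{\Delta Y_s\neq0}=\sum_{s\le t}(G\circ\phi)(s,\Delta X_s)\1{\Delta X_s\neq0}=(G\circ\phi)*\mu^X_t,
\]
both sides being defined in $[0,\infty]$ and agreeing term by term; in particular one is finite if and only if the other is. This establishes \eqref{eq:fmu}.

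The step I expect to require the most care is the first one: pinning down, from the defining properties of the Dol\'eans-Dade exponential and the definitions \eqref{eq:tauC} of $\tau_0$ and $\tau_J$, that $\tau\wedge\tau_J$ lies below the first time $X$ jumps by $-1$, so that on $\lc 0,\tau\wedge\tau_J\lc$ the reciprocal $1/\Ecal(X)$ is a genuine nonvanishing semimartingale and $\phi(\Delta X)$ is well defined. Once that is in place, the remaining steps are routine; justifying the product formula and the path-dependence of the right-hand side of \eqref{eq:D:stochExp} on $\lc 0,\tau\lc$ needs only the usual stopping at the announcing sequence.
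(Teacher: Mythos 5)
Your proof is correct, but your route to $\Ecal(X)\,\Ecal(Y)=1$ is genuinely different from the paper's. The paper first establishes \eqref{eq:fmu} from the jump identity $\Delta Y=\phi(\Delta X)$ on $\lc 0,\tau\wedge\tau_J\lc$ (exactly as you do), and then derives the product identity from \eqref{eq:fmu}: applying it with $G(y)=y-\log|1+y|$ and inserting the result into the explicit representation \eqref{eq:170206.1} of $\Ecal(Y)$, a direct calculation gives $\Ecal(Y)=1/\Ecal(X)$ on $\lc 0,\tau\wedge\tau_J\lc$. You instead make the product identity independent of \eqref{eq:fmu}, invoking Yor's formula $\Ecal(X)\Ecal(Y)=\Ecal(X+Y+[X,Y])$ (localized along an announcing sequence) and checking via the bracket computation $[X,Y]=-[X,X]^c-\frac{x^2}{1+x}*\mu^X$ on $\{\Delta X\neq-1\}$ that $X+Y+[X,Y]=0$ on $\lc 0,\tau\wedge\tau_J\lc$; this trades the paper's manipulation of the exponent in \eqref{eq:170206.1} for a covariation identity, at the cost of importing the product formula and a (routine) locality argument showing $\Ecal(W)=1$ on the sub-interval when $W$ vanishes there. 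A welcome addition is your explicit verification that $\lc 0,\tau\wedge\tau_J\lc\subset\lc 0,R\lc$, with $R$ the first time $\Delta X=-1$, via the nonvanishing of $\Ecal(X)$ and $\Ecal(X)_-$ before $R$ and the definition of $\tau_J$ for $Z=\Ecal(X)$; the paper uses this fact only implicitly when writing $\Delta Y=\phi(\Delta X)$ on $\lc 0,\tau\wedge\tau_J\lc$. Your treatment of \eqref{eq:fmu} itself coincides with the paper's.
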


For an alternative, systematic proof of Theorem~\ref{T:reciprocal}, see also \cite{Cerny:Ruf}.

\begin{remark}
	Since $|x^2/(1+x)| \leq 2x^2$ for $|x|\le1/2$, the process $x^2/(1+x) * \mu^X$ appearing in \eqref{eq:defN} is finite-valued on $\lc0,\tau\lc$.\qed
\end{remark}

\begin{remark}\label{R:alternative}
	Since $\phi$ is an involution, the identity \eqref{eq:fmu} is equivalent to
	\[
	F*\mu^X = (F \circ \phi) * \mu^Y \qquad \text{on $\lc0,\tau \wedge \tau_J\lc$}
	\]
	for the nonnegative function $F=G\circ\phi$. \qed
\end{remark}

\begin{proof}[Proof of Theorem~\ref{T:reciprocal}]
Note that, in view of \eqref{eq:defN}, we have
\[
\Delta Y = -\Delta X + \frac{(\Delta X)^2}{1+\Delta X} = \phi(\Delta X) \qquad \qquad \text{on $\lc 0,\tau \wedge \tau_J\lc$}.
\]
This implies \eqref{eq:fmu}.  Now, applying~\eqref{eq:fmu} to the function $G(y) = y-\log|1+y|$ yields
\[
(y-\log|1+y|)*\mu^Y = \left(-1+\frac{1}{1+x} + \log|1+x|\right) * \mu^X \qquad \text{on $\lc 0,\tau \wedge \tau_J\lc$}. 
\]
A direct calculation then gives $\Ecal(Y)=1/\Ecal(X)$ on $\lc0,\tau \wedge \tau_J\lc$. This completes the proof.
\end{proof}

\section{Examples}  \label{S:5}
In this section, we collect some examples to put this note's results into context. We begin with two examples that are rather standard and concern geometric Brownian motion and the stochastic exponential of a one-jump martingale.

\begin{example}
	Let $X \in \mathfrak L$ be Brownian motion.  Then the stopping time $\tau = \infty$ is $X$--maximal, and $\mathcal E(X)_t= e^{X_t-t/2}$ for all $t \geq 0$ is geometric Brownian motion. From \eqref{eq:defN},  $Y_t = t-X_t$ for all $t \geq 0$ satisfies $\mathcal E(X) \mathcal E(Y) = 1$. \qed
\end{example}

\begin{example}
	Let $E$ be a standard exponentially distributed random variable and assume that $\mathbb F$ is the smallest right-continuous filtration such that $\oo_{\lc 0, E\lc}$ is adapted. Let now $X$ be given by $X_t = t \wedge E - \oo_{\{E \leq t\}}$ for all $t \geq 0$. Then $X \in \mathfrak L$, $X$ is a martingale,  $\tau = \infty$ is $X$--maximal, and $\mathcal E(X)_t= e^{t}\oo_{\lc 0, E\lc}$ for all $t \geq 0$. \qed
\end{example}

The next two examples discuss the stochastic exponentials of a random walk and of a time-changed version of it.

\begin{example}   \label{Ex:3}
Let $(\Theta_n)_{n \in \N}$ denote a sequence of independent random variables with $\P(\Theta_n = 1)=\P(\Theta_n = -1) = 1/2$ and let $X_t = \sum_{n=1}^{[t]} \Theta_n$ for all $t \geq 0$ be a standard random walk, where $[t]$ denotes the integer part of $t$. Assume that $\mathbb F$ is the smallest right-continuous filtration such that $X$ is adapted.  Define the stopping time
\begin{align*}
	\rho = \inf \{t \geq 0: \Delta X_t = -1\}.
\end{align*}
Then $\mathcal E(X)_t = \mathcal E(X^\rho)_t = 2^{[t]} \oo_{\lc 0, \rho\lc}$ for all $t \geq 0$ and $X^\rho = \mathcal L (\mathcal E(X)) \in \mathfrak L$.
\qed
\end{example}

\begin{example}
Similarly to Example~\ref{Ex:3}, let $(\Theta_n)_{n \in \N}$ denote a sequence of independent random variables with $\P(\Theta_n = 1/2)=\P(\Theta_n = -1/2) = 1/2$. Let now $X$ be a local martingale on $[0,1)$, given as a random walk that jumps at the deterministic times $(t_n)_{n \in \N}$ where $t_n=1-n^{-1}$. That is, $X_t = \sum_{n\colon t_n \le t}\Theta_n$ for all $t \in [0,1)$. Assume that $\mathbb F$ is the smallest right-continuous filtration such that $X$ is adapted. Then the deterministic stopping time $\tau = 1$ is $X$--maximal; hence $X \in \mathfrak L$. Moreover, the stochastic exponential $Z = \mathcal{E}(M) \in \mathfrak Z$ is given by
\[
Z_t = \exp\left( \sum_{n: t_n\le t} \log(1+\Theta_n) \right) = \prod_{n: t_n\le t} (1+\Theta_n).
\]

Theorem~\ref{T:SE} yields that $\tau_C = 1$, where $\tau_C$ was defined in \eqref{eq:tauC}. Alternatively, the strong law of large numbers and the fact that $\E[\log(1+\Theta_1)]<0$ imply $\P(\lim_{t\uparrow 1}Z_t = 0)=1$. 
This illustrates that $\tau_C$ in \eqref{eq:tauC} can be finite, even if the local martingale $Z$ has no continuous component.

Define next the semimartingale Y on $\lc 0, 1\lc$ by 
\begin{align*}
	Y_t = -X_t + \frac{x^2}{1+x} * \mu^X_t =  \left(-x+ \frac{x^2}{1+x}\right) * \mu^X_t = \frac{-x}{1+x} * \mu^X_t  \qquad \text{for all $t \in [0,1)$}.
\end{align*}
Theorem~\ref{T:reciprocal} yield that $\mathcal E(X) \mathcal E(Y) = 1$ on $\lc 0, 1\lc$. Indeed, using \eqref{eq:170206.1}, we get
\begin{align*}
	\mathcal E(Y) &=  \oo_{\lc0,1\lc} \exp\left( \log(1+y) * \mu^Y\right) =   \oo_{\lc0,1\lc} \exp\left( \log\left(1+\frac{-x}{1+x}\right) * \mu^X\right)\\
		&= \oo_{\lc0,1\lc} \exp\left(- \log\left(1+x\right) * \mu^X\right),
\end{align*}
which confirms this claim.
\qed
\end{example}

The last example interprets Brownian motion starting in one and stopped when hitting zero as a stochastic exponential and discusses the corresponding stochastic logarithm.

\begin{example}
	Let $B$ be Brownian motion starting in zero, define the stopping time
\begin{align*}
	\rho = \inf \{t \geq 0: B_t = -1\}
\end{align*}	
	and the nonnegative martingale $Z = 1+B^\rho \in \mathfrak Z$. That is, $Z$ is Brownian motion started in one and stopped as soon as it hits zero.  We now compute
	\begin{align*}
		X = \mathcal L(Z)  = \frac{1}{Z}\1{Z>0} \cdot Z =  \frac{1}{1+B} \cdot B  \qquad \text{on $\lc 0,\rho\lc$}.
	\end{align*}
	Note that $\rho$ is indeed $X$--maximal by Theorem~\ref{T:SE}. \qed
\end{example}

\bibliography{aa_bib}{}
\bibliographystyle{apalike}

\section*{Addendum}
After this article was published we realized that we omitted an important assumption in Definition~\ref{D:stochLog}. Indeed, for the definition to make sense we must assume that $Z$ is absorbed in zero after jumping to zero; i.e., $Z = Z^{\tau_J}$ in the notation of \eqref{eq:tauC}. Without this assumption, the stochastic integral in the definition of the stochastic logarithm $\mathcal L(Z)$ might not exist. As an example, consider the deterministic process $Z = (Z_t)_{t \geq 0}$ given by $Z_t = \1{t < 1} + (1-t) \1{t \geq 1}$, along with $\tau = \infty$. In this case we  have $Z_- \neq 0$, but $1/Z_-$ is not integrable with respect to $Z$.  

For this reason, the first sentence of Definition~\ref{D:stochLog} should read as follows:
\begin{quote}
Let $\tau$ be a foretellable time and $Z$ be a progressively measurable process such that $\tau \leq \tau_C$ and  such that $Z$ is a semimartingale on $\lc 0,\tau\lc$ \emph{and such that $Z = Z^{\tau_J}$. }
\end{quote}

Similarly, the first sentence in Theorem~\ref{T:181008}(i) should read as follows:
\begin{quote}
	Let $Z$ be a progressively measurable process with $Z_0 =  1$ such that  $\tau \leq \tau_C$,  $Z$ is a semimartingale on $\lc 0,\tau\lc$, \emph{$Z = 0$ on $\lc \tau, \infty\lc$, and $Z^{\tau_J}= Z$}.
\end{quote}

No further changes are required.

\end{document}